\documentclass[onecolumn,amsthm,letterpaper,draft]{autart}

\usepackage{amsfonts}
\usepackage{amsmath}
\usepackage{amsthm}
\usepackage{graphicx}

\usepackage[numbers,sort&compress,longnamesfirst]{natbib} 
\bibliographystyle{plainnat}

\graphicspath{{graphics_included/}}



\newcommand{\R}{\mathbb{R}}

\newcommand{\N}{\mathbb{N}}

\newcommand{\norm}[1]{\lVert{#1}\rVert}
\newcommand{\eemph}[1]{\textbf{\textit{#1}}}

\let\bbl\Bigl

\let\bbbbl\Biggl

\let\bbr\Bigr

\let\bbbbr\Biggr



\begin{document}

\begin{frontmatter}

\title{Exponentially Stable Nonlinear Systems have Polynomial Lyapunov Functions on Bounded Regions}

\thanks[footnoteinfo]{Corresponding author M.~M.~Peet.}

\author[mp]{Matthew M. Peet}\ead{matthew.peet@inria.fr},

\address[mp]{INRIA-Rocquencourt,Domaine de Voluceau, Rocquencourt BP105, 78153 Le Chesnay Cedex, France}

\begin{keyword}
Nonlinear Systems, Stability, Lyapunov functions, Semidefinite
Programming, Polynomial Optimization, Polynomials, Polynomial
Approximation, Sobolev Spaces.
\end{keyword}

\begin{abstract}
This paper presents a proof that existence of a polynomial Lyapunov
function is necessary and sufficient for exponential stability of
sufficiently smooth nonlinear ordinary differential equations on
bounded sets. The main result states that if there exists an n-times
continuously differentiable Lyapunov function which proves
exponential stability on a bounded subset of $\R^n$, then there
exists a polynomial Lyapunov function which proves exponential
stability on the same region. Such a continuous Lyapunov function
will exist if, for example, the right-hand side of the differential
equation is polynomial or at least $n$-times continuously
differentiable. The proof is based on a generalization of the
Weierstrass approximation theorem to differentiable functions in
several variables. Specifically, we show how to use polynomials to
approximate a differentiable function in the Sobolev norm
$W^{1,\infty}$ to any desired accuracy. We combine this
approximation result with the second-order Taylor series expansion
to find that polynomial Lyapunov functions can approximate
continuous Lyapunov functions arbitrarily well on bounded sets. Our
investigation is motivated by the use of polynomial optimization
algorithms to construct polynomial Lyapunov functions.
\end{abstract}

\end{frontmatter}


\section{Introduction}
The Weierstrass approximation theorem was proven in
1885~\cite{weierstrass_1885}. This result demonstrated that
real-valued polynomial functions can approximate real-valued
continuous functions arbitrarily well with respect to the supremum
norm on a compact interval. Various structural generalizations of
the Weierstrass approximation theorem have focused on generalized
mappings, as in~\citet{stone_1948}, and on alternate topologies, as
in~\citet{krein_1945}. Polynomial approximation of differentiable
functions has been studied in numerical analysis of differential and
partial differential equations. Results of relevance include the
Bramble-Hilbert Lemma~\cite{bramble_1971} and its generalization
in~\citet{dupont_1980}, and Jackson's theorem~\cite{jackson_1911},
generalizations of which can be found in the work
of~\citet{timan_1966}.

The approximation of Sobolev spaces by smooth functions has been
studied in a number of contexts. In~\citet{everitt_1993}, the
density of polynomials of a single variable in certain Sobolev
spaces was discussed. Other work considers weighted Sobolev spaces,
as in~\citet{portilla_xxx}.

The density of infinitely continuously differentiable test functions
in Sobolev spaces has been studied in the context of partial
differential equations. Important results include the Meyers-Serrin
Theorem~\cite{meyers_1964}, extensions of which can be found
in~\citet{adams_book} or~\citet{evans_book}.


Recently, the density of polynomials in the space of continuous
functions has been used to motivate research on optimization over
parameterized sets of positive polynomial functions. Relevant
results include an extension of the Weierstrass approximation
theorem to polynomials subject to affine
constraints~\cite{peet_2007IFAC1}. Examples of algorithms for
optimization over cones of positive polynomials include
SOSTOOLS~\cite{prajna_2004}, which optimizes over the cone of sums
of squares of polynomials, and Gloptipoly~\cite{henrion_2001}, which
optimizes over the dual space to obtain bounds on the primal
objective.

One application of polynomial optimization which has received
particular attention has been construction of polynomial Lyapunov
functions for ordinary nonlinear differential equations of the form
\begin{equation*}
    \dot{x}(t)=f(x(t)),
\end{equation*}
where $f:\R^n \rightarrow \R^n$. Several converse theorems have
shown that local exponential stability of this system implies the
existence of a Lyapunov function with certain properties of
continuity. The reader is referred to~\citet{hahn_1967}
and~\citet{krasovskii_1963} for an extensive treatment of converse
theorems of Lyapunov. It is not generally known, however, under what
conditions an exponentially stable system has a polynomial Lyapunov
function.

The main conclusion of this paper is summarized in
Theorem~\ref{prop:conversepoly}, where we show that if $f$ is
$n$-times continuously differentiable, then exponentially stability
of $f$ on a ball is equivalent to the existence of a polynomial
Lyapunov function which decreases exponentially along trajectories
contained in that ball. The smoothness condition is automatically
satisfied if $f$ is polynomial.

To establish this converse Lyapunov result, we prove two extensions
to the Weierstrass approximation theorem. In
Theorem~\ref{thm:theoremtemp1}, we show that for bounded regions,
polynomials can be used to approximate continuously differentiable
multivariate functions arbitrarily well in a variety of norms,
including the Sobolev norm $W^{1,\infty}$. This means that for any
continuously differentiable function and any $\gamma>0$, we can find
a polynomial which approximates the function with error $\gamma$ and
whose partial derivatives approximate those of the function with
error $\gamma$. The proof is based on a construction using
approximations to the partial derivatives.

Our second extension combines a second order Taylor series expansion
with the Weierstrass approximation theorem to find polynomials which
approximate functions with a pointwise weight on the error given by
\[
w(x)=\frac{1}{x^T x}.
\]

These two extensions are combined into the main polynomial
approximation result, which is Theorem~\ref{thm:theorem1}. The
application to Lyapunov functions is given in
Proposition~\ref{prop:prop1}. Proposition~\ref{prop:prop1} states
that if there exists a sufficiently smooth continuous Lyapunov
function which proves exponential stability on a bounded set, then
there exists a polynomial Lyapunov function which proves exponential
stability on the same set. In Section~\ref{sec:lyapunov}, we
interpret our work as a converse Lyapunov theorem and briefly
discuss implications for the use of polynomial optimization to prove
stability of nonlinear ordinary differential equations.



\section{Notation and Background}

Let $\N^n$ denote the set of length $n$ vectors of non-negative
natural numbers. Denote the unit cube in $\N^n$ by $Z^n := \{\alpha
\in \N^n : \alpha_i \in \{0,1\}\}$. For $x \in \R^n$,
$\norm{x}_\infty = \max_i |x_i|$ and $\norm{x}_2=\sqrt{x^T x}$.
Define the unit cube in $\R^n$ by $B:=\{x \in \R^n \, : \,
\norm{x}_\infty \le 1 \}$. Let $\mathcal{C}(B)$ be the Banach space
of scalar continuous functions defined on $B \subset \R^n$ with norm
\[\norm{f}_\infty :=\sup_{x \in B}\|f(x)\|_\infty.\]
For operators $h_i:X\rightarrow X$, let $\prod_i h_i:X\rightarrow X$
denote the sequential composition of the $h_i$. i.e.
\[
    \prod_i h_i:=h_1 \circ h_{2}\circ \cdots \circ h_{n-1}\circ h_n .
\]
For a sufficiently regular function $f:\R^n \rightarrow \R$ and
$\alpha \in \N^n$, we will often use the following shorthand to
denote the partial derivative
\[
    D^\alpha f(x) := \frac{\partial^\alpha }{\partial x^\alpha}f(x)= \prod_{i=1}^n \frac{\partial^{\alpha_i}}{\partial
    x^{\alpha_i}}f(x),
\]
where naturally, $\partial^0 f/\partial x_i^0=f$. For $\Omega
\subset \R^n$, we define the following sets of differentiable
functions.
\begin{align*}
    &\mathcal{C}_1^i(\Omega):=\bbbbl\{f \, : \, D^\alpha f \in \mathcal{C}(\Omega)\quad
\text{for any $\alpha \in \N^n$ such that }
\norm{\alpha}_1=\sum_{j=1}^n \alpha_j \le i.\bbbbr\}
\end{align*}
\begin{align*}
    &\mathcal{C}_\infty^i(\Omega):=\bbl\{f \, : \, D^\alpha f \in \mathcal{C}(\Omega)\quad
\text{for any $\alpha \in \N^n$ such that }
\norm{\alpha}_\infty=\max_{j} \alpha_j \le i.\bbr\}
\end{align*}

$\mathcal{C}^\infty(B)=\mathcal{C}_1^\infty(B)=\mathcal{C}_\infty^\infty(B)$
is the logical extension to infinitely continuously differentiable
functions. Note that in $n$-dimensions, $\mathcal{C}_1^i(B) \subset
\mathcal{C}_\infty^i(B) \subset \mathcal{C}_1^{i n}(B)$. We will
occasionally refer to the Banach spaces $W^{k,p}(\Omega)$, which
denote the standard Sobolev spaces of locally summable functions
$u:\Omega \rightarrow \R$ with \eemph{weak} derivatives $D^\alpha u
\in L_p(\Omega)$ for $|\alpha|_1 \le k$ and norm
\[
    \norm{u}_{W^{k,p}}:=\sum_{|\alpha|_1 \le k }\left\|D^\alpha
    u\right\|_{L_p}.
\]

%

The following version of the Weierstrass approximation theorem in
multiple variables comes from~\citet{timan_1966}.

\begin{thm}
Suppose $f:\R^n \rightarrow \R$ is continuous and $G \subset \R^n$
is compact. Then there exists a sequence of polynomials which
converges to $f$ uniformly in $G$.
\end{thm}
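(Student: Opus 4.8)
The plan is to reduce the multivariate statement to the one–variable Weierstrass theorem through two steps: first replace the arbitrary compact set $G$ by a box, and then approximate on the box.

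First, since $G$ is compact it lies inside some closed box $Q=[-a,a]^n$. By the Tietze extension theorem $f$ extends to a continuous function $\tilde f$ on $Q$, and any polynomial approximating $\tilde f$ uniformly on $Q$ also approximates $f$ uniformly on $G\subseteq Q$. After an affine rescaling it therefore suffices to treat $G=[0,1]^n$, where $f$ is now uniformly continuous.

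Second, on $[0,1]^n$ I would use the multivariate Bernstein polynomials
\[
(B_mf)(x)=\sum_{0\le k_1,\dots,k_n\le m} f\!\Bigl(\tfrac{k_1}{m},\dots,\tfrac{k_n}{m}\Bigr)\prod_{j=1}^n\binom{m}{k_j}x_j^{k_j}(1-x_j)^{m-k_j},
\]
which are polynomials of total degree at most $mn$. Reading the coefficients as the joint law of $n$ independent random variables $S_j/m$ with $S_j\sim\mathrm{Binomial}(m,x_j)$, one has $(B_mf)(x)=\mathbb{E}\,f(S/m)$, where each coordinate satisfies $\mathbb{E}[S_j/m]=x_j$ and $\mathrm{Var}(S_j/m)=x_j(1-x_j)/m\le 1/(4m)$ uniformly in $x\in[0,1]^n$. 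Combining this variance bound (via Chebyshev's inequality applied coordinatewise) with the uniform continuity of $f$ yields $\norm{B_mf-f}_\infty\to 0$, so the sequence $\{B_mf\}_{m\ge 1}$ does the job. A purely inductive alternative is to show that finite sums $\sum_i g_i(x_1)h_i(x_2,\dots,x_n)$ are sup–norm dense in $\mathcal{C}([0,1]^n)$ — using a finite partition of unity of $[0,1]$ adapted to the modulus of continuity of the continuous map $x_1\mapsto f(x_1,\cdot)\in\mathcal{C}([0,1]^{n-1})$ — and then invoking the one–variable Weierstrass theorem on each $g_i$ and the inductive hypothesis on each $h_i$; a third, non–constructive option is simply Stone–Weierstrass, since the polynomials form a point–separating subalgebra of $\mathcal{C}(G)$ containing the constants.

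The genuinely delicate point is not any single estimate but the simultaneity: the approximation must be uniform in all $n$ coordinates at once, not one coordinate at a time. This is exactly what the $x$–independent bound $\mathrm{Var}(S_j/m)\le 1/(4m)$ supplies, and it is the reason for first passing to a box, where uniform continuity of $f$ is available. The remaining ingredients — the Bernstein error computation and the Tietze extension — are routine.
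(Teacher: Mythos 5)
Your argument is correct, but note that the paper does not prove this statement at all: it is quoted as a known classical result from Timan's book on approximation theory, so there is no internal proof to compare against. Your route --- enclose $G$ in a box, rescale to $[0,1]^n$, and run the multivariate Bernstein construction with the coordinatewise variance bound $\mathrm{Var}(S_j/m)\le 1/(4m)$ feeding into Chebyshev plus uniform continuity --- is the standard constructive proof and is sound; the Stone--Weierstrass alternative you mention is equally valid and shorter. One small redundancy: the Tietze extension step is unnecessary here, since the hypothesis already gives $f$ continuous on all of $\R^n$, so its restriction to the box $Q$ is automatically continuous (and uniformly continuous by compactness); Tietze would only be needed if $f$ were defined merely on $G$. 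Your closing remark about the delicacy of simultaneous uniformity in all coordinates is well taken and is precisely why the Bernstein estimate must be uniform in $x$; it is also worth observing that the constructive Bernstein route is in the spirit of the rest of the paper, which remarks after Theorem~\ref{thm:theoremtemp1} that Bernstein polynomials make its construction explicit.
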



\section{Approximation of Differentiable Functions}

Most of the technical results of this paper concern a constructive
method of approximating a differentiable function of several
variables using approximations to the partial derivatives of that
function. Specifically, the result states that if one can find
polynomials which approximate the partial derivatives of a given
function to accuracy $\gamma/2^n$, then one can construct a
polynomial which approximates the given function to accuracy
$\gamma$, and all of whose partial derivatives approximate those of
the given function to accuracy $\gamma$.

The difficulty of this problem is a consequence of the fact that,
for a given function, the partial derivatives of that function are
not independent. For example, we have the identities
\[
    D^{(1,1,0)}f(x,y,z)=D^{(1,0,0)}\left(f(x,y,0) + \int_{0}^z D^{(0,0,1)}f(x,y,s)\,ds\right)
    =D^{(1,0,0)}\left(D^{(0,1,0)}f(x,y,z)\right),
\]
among others. Therefore, given approximations to the partial
derivatives of a function, these approximations will, in general,
not be the partial derivatives of any function. Then the problem
becomes, for each partial derivative approximation, how to extract
the information which is unique to that partial derivative in order
to form an approximation to the original function. The following
construction shows how this can be done.

\begin{defn}
Let $X$ be the space of $2^n$-tuples of continuous functions indexed
using the $2^n$ elements $\alpha \in Z^n$. Thus if functions
$f_\alpha \in \mathcal{C}(B)$ for all $\alpha\in Z^n$, then these
functions define an element of $X$, denoted $\{f_\alpha\}_{\alpha
\in Z^n} \in X$. Define the linear map $K : X \rightarrow
\mathcal{C}^1_\infty(B)$ as
\[
    K\left(\{f_\alpha\}_{\alpha
\in Z^n}\right)=\sum_{\alpha \in Z^n} G_\alpha f_\alpha
\]
where $G_\alpha:\mathcal{C}(B)\rightarrow \mathcal{C}^1_\infty(B)$
is given by
\[
    G_\alpha h=\left(\prod_{i=1}^n g_{i,\alpha_i}\right) h
\]
and where
\begin{align*}
    &(g_{i,j} h)(x_1,\dots,x_n) =\begin{cases}
    h(x_1,\ldots,x_{i-1},0,x_{i+1},\ldots,x_n) & j=0\\
    \int_{0}^{x_i} h(x_1,\ldots,x_{i-1},s,x_{i+1},\ldots,x_n) \,ds&
    j=1.
    \end{cases}
\end{align*}
\end{defn}

In practice, the functions $f_\alpha$ represent either the partial
derivatives of a function or approximations to those partial
derivatives. The following examples illustrate the construction.

\noindent\textbf{Example:} If $p=K(\{q_\alpha\}_{\alpha \in Z^2})$,
then
\begin{align*}
    p(x_1,x_2)&=\int_0^{x_1} \int_0^{x_2}
        q_{(1,1)}(s_1,s_2)\,ds_1\, ds_2\\
        &+\int_0^{x_1} q_{(1,0)}(s_1,0)\,ds_1\\
        &+\int_0^{x_2} q_{(0,1)}(0,s_2)\,ds_2\\
        &+q_{(0,0)}(0,0).
\end{align*}

Notice that this structure automatically gives a way of
approximating the partial derivatives of $p$. e.g.
\[
    \frac{\partial }{\partial x_1}p(x_1,x_2)=\int_0^{x_2}
        q_{(1,1)}(x_1,s_2)\, ds_2 + q_{(1,0)}(x_1,0).
\]

If $n=3$, then
\begin{align*}
    p(x_1,x_2,x_3)&=\int_0^{x_1} \int_0^{x_2} \int_0^{x_3}
        q_{(1,1,1)}(s_1,s_2,s_3)\,ds_1\, ds_2\, d s_3\\
    &+\int_0^{x_1} \int_0^{x_2}
        q_{(1,1,0)}(s_1,s_2,0)\,ds_1\, ds_2\\
    &+\int_0^{x_1} \int_0^{x_3}
        q_{(1,0,1)}(s_1,0,s_3)\,ds_1\, d s_3\\
    &+ \int_0^{x_2} \int_0^{x_3}
        q_{(0,1,1)}(0,s_2,s_3)\, ds_2\, d s_3\\
    &+\int_0^{x_1} q_{(1,0,0)}(s_1,0,0)\,ds_1\\
    &+\int_0^{x_2} q_{(0,1,0)}(0,s_2,0)\,ds_2\\
    &+\int_0^{x_3}q_{(0,0,1)}(0,0,s_3)\,d s_3\\
    &+q_{(0,0,0)}(0,0,0).
\end{align*}

The following lemma shows that when the $f_\alpha$ are the partial
derivatives of a function, we recover the original function. The
proof is simple and works by using a single integral identity to
repeatedly expand the function.

\begin{lem}\label{lem:lemma3}
For $v \in \mathcal{C}_\infty^1(B)$, if $f_\alpha=D^\alpha v$ for
all $\alpha \in Z^n$, then $K(\{f_\alpha\}_{\alpha \in Z^n})=v$.
\end{lem}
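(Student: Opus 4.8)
The plan is to prove the identity by induction on the dimension $n$, peeling off one variable at a time using the single fundamental-theorem-of-calculus identity
\[
    v(x_1,\dots,x_n) = v(x_1,\dots,x_{n-1},0) + \int_0^{x_n} D^{(0,\dots,0,1)}v(x_1,\dots,x_{n-1},s)\,ds,
\]
which is exactly $v = (g_{n,0} v) + (g_{n,1}\, D^{e_n} v)$ where $e_n$ is the $n$-th standard basis vector. Since $v \in \mathcal{C}^1_\infty(B)$, every mixed partial $D^\alpha v$ with $\alpha \in Z^n$ is continuous, so all the objects below are well-defined and the order of integration/differentiation may be exchanged freely (Clairaut/Schwarz applies).

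First I would set up the induction carefully. For the base case $n=0$ (or $n=1$, if one prefers), $K$ is the identity (resp.\ the single identity $v(x_1) = v(0) + \int_0^{x_1} v'(s)\,ds$), which is trivial. For the inductive step, write $Z^n = \{\alpha : \alpha_n = 0\} \cup \{\alpha : \alpha_n = 1\}$, and correspondingly split
\[
    K(\{f_\alpha\}) = \sum_{\beta \in Z^{n-1}} g_{n,0}\Bigl(\textstyle\prod_{i=1}^{n-1} g_{i,\beta_i}\Bigr) f_{(\beta,0)} \;+\; \sum_{\beta \in Z^{n-1}} g_{n,1}\Bigl(\textstyle\prod_{i=1}^{n-1} g_{i,\beta_i}\Bigr) f_{(\beta,1)}.
\]
The operators $g_{n,0}$ and $g_{n,1}$ commute with each $g_{i,\beta_i}$ for $i<n$ (they act on disjoint coordinates), and $g_{n,0}$ commutes with $D^\beta$ (acting in the first $n-1$ coordinates) because setting $x_n=0$ commutes with differentiating in $x_1,\dots,x_{n-1}$; likewise $g_{n,1}$ commutes with $D^\beta$. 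So applying $g_{n,0}$ to the $(n-1)$-dimensional operator $K^{(n-1)}$ applied to the tuple $\{D^\beta (g_{n,0} v)\}_{\beta\in Z^{n-1}}$ recovers $g_{n,0} v$ by the inductive hypothesis (note $g_{n,0}v \in \mathcal{C}^1_\infty(B)$ as a function of the first $n-1$ variables), and similarly the second sum recovers $g_{n,1}(D^{e_n} v)$. Summing the two and using the FTC identity above yields $v$. The key bookkeeping point to verify is that $f_{(\beta,0)} = D^{(\beta,0)}v$ restricted appropriately equals $D^\beta(g_{n,0}v)$ and $f_{(\beta,1)} = D^{(\beta,1)}v$ equals $D^\beta(D^{e_n}v)$, which is immediate from the definition of $D^\alpha$ and the commutation just noted.

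The main obstacle is purely notational: keeping track of which operators act on which coordinates and justifying that they commute, so that the $n$-dimensional sum genuinely factors as $g_{n,0}\circ K^{(n-1)}$ plus $g_{n,1}\circ K^{(n-1)}$ on the two halves of $Z^n$. There is no analytic difficulty — continuity of all $D^\alpha v$, $\alpha\in Z^n$, is exactly what $v\in\mathcal{C}^1_\infty(B)$ gives us, and that is all that is needed to differentiate under the integral sign and to interchange partial derivatives. An alternative, entirely equivalent route is the "telescoping" argument hinted at in the text: start from $v$ and apply the FTC identity in the variable $x_1$, then in $x_2$ to each resulting term, and so on; after $n$ rounds one obtains precisely the $2^n$-term expression defining $K(\{D^\alpha v\})$. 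I would likely present this telescoping version in the body and relegate the commutation checks to a sentence, since it is the more transparent of the two.
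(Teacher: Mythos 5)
Your proposal is correct and is essentially the paper's own argument: the paper proves the lemma by exactly the ``telescoping'' expansion you describe at the end, applying the identity $v = g_{i,0}v + g_{i,1}\frac{\partial}{\partial x_i}v$ successively in $x_1, x_2, \dots, x_n$ until the $2^n$-term sum defining $K(\{D^\alpha v\})$ appears. Your induction on dimension is just that same expansion packaged recursively (peeling off the last variable instead of the first), and the commutation and differentiation-under-the-integral checks you flag are indeed the only details to verify, all guaranteed by $v \in \mathcal{C}^1_\infty(B)$.
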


\begin{proof}
By assumption, $v \in \mathcal{C}_\infty^1(B)$ and so $D^\alpha v$
exist for all $\alpha \in Z^n$. The statement then follows from
application of the identity $v=g_{i,0} v + g_{i,1}
\frac{\partial}{\partial x_i} v$ recursively for $i=1,\ldots,n$.
That is, we make the sequential substitutions $v \mapsto g_{1,0} v +
g_{1,1} \frac{\partial}{\partial x_1} v$, $v \mapsto g_{2,0} v +
g_{2,1} \frac{\partial}{\partial x_1} v$ , $\dots$, $v \mapsto
g_{n,0} v + g_{n,1} \frac{\partial}{\partial x_n} v$. Then we have
the following expansion.
\begin{align*}
v=& g_{1,0}v+g_{1,1}D^{(1,0,\dots,0)}v\\
=&g_{1,0}g_{2,0}v+g_{1,0}g_{2,1}D^{(0,1,\dots,0)}v+g_{2,0}g_{1,1}D^{(1,0,\dots,0)}v\\
&+g_{2,1}g_{1,1}D^{(1,1,\dots,0)}v\\
=&g_{1,0}g_{2,0}g_{3,0}v+g_{1,0}g_{2,0}g_{3,1}D^{(0,0,1,0,\dots,0)}v\\
&+g_{1,0}g_{2,1}g_{3,0}D^{(0,1,0,\dots,0)}v+g_{1,0}g_{2,1}g_{3,1}D^{(0,1,1,0,\dots,0)}v\\
&+g_{2,0}g_{1,1}g_{3,0}D^{(1,0,\dots,0)}v+g_{2,0}g_{1,1}g_{3,1}D^{(1,0,1,0,\dots,0)}v\\
&+g_{2,1}g_{1,1}g_{3,0}D^{(1,1,\dots,0)}v+g_{2,1}g_{1,1}g_{3,1}D^{(1,1,1,0,\dots,0)}v\\
=& \cdots\\
=&K(\{D^\alpha v\}_{\alpha \in Z^n}),
\end{align*}
as desired.
\end{proof}

The following lemma states that the linear map $K$ is Lipschitz
continuous in an appropriate sense. This means that a small error in
the partial derivatives, $f_\alpha$, results in a small error of the
construction $K(\{f_\alpha\}_{\alpha \in Z^n})$ and all of its
partial derivatives.

\begin{lem}\label{lem:lemmatemp1}
Suppose $p=\{p_\alpha\}_{\alpha \in Z^n} \in X$ and
$q=\{q_\alpha\}_{\alpha \in Z^n} \in X$, then
\[
\max_{\beta \in Z^n}\norm{D^\beta K p - D^\beta Kq}_\infty \le 2^n
\max_{\alpha \in Z^n}\norm{p_\alpha - q_\alpha}_\infty.
\]
\end{lem}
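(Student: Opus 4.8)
The plan is to exploit the linearity of $K$ and the product structure of the operators $G_\alpha = \prod_{i=1}^n g_{i,\alpha_i}$, reducing everything to a bound on a single factor $g_{i,j}$. Since $K$ is linear, $Kp - Kq = K(p-q)$, so writing $r_\alpha := p_\alpha - q_\alpha$ it suffices to bound $\norm{D^\beta K(\{r_\alpha\})}_\infty$ uniformly over $\beta \in Z^n$ by $2^n \max_\alpha \norm{r_\alpha}_\infty$. By the triangle inequality over the $2^n$ terms in $K(\{r_\alpha\}) = \sum_{\alpha \in Z^n} G_\alpha r_\alpha$, it is enough to show that for each fixed $\alpha$ and each $\beta \in Z^n$ we have $\norm{D^\beta G_\alpha r_\alpha}_\infty \le \max_\gamma \norm{r_\gamma}_\infty$; summing the $2^n$ such bounds then gives the factor $2^n$.

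First I would record the elementary one-variable facts about the operators $g_{i,j}$ acting on $\mathcal{C}(B)$ (with $B$ the unit cube): each $g_{i,j}$ is a contraction in the sup-norm, since $\abs{(g_{i,0}h)(x)} = \abs{h(\ldots,0,\ldots)} \le \norm{h}_\infty$ and $\abs{(g_{i,1}h)(x)} = \bl\lvert \int_0^{x_i} h\,ds\br\rvert \le \abs{x_i}\,\norm{h}_\infty \le \norm{h}_\infty$ because $\abs{x_i}\le 1$ on $B$. Moreover the operators $g_{i,j}$ for distinct indices $i$ act on disjoint variables and hence commute, so $G_\alpha$ is a well-defined composition and $\norm{G_\alpha h}_\infty \le \norm{h}_\infty$. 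The key additional observation is how $\partial/\partial x_i$ interacts with the factor $g_{i,\alpha_i}$: we have $\frac{\partial}{\partial x_i} g_{i,1} = \mathrm{id}$ (fundamental theorem of calculus) and $\frac{\partial}{\partial x_i} g_{i,0} = 0$ (the $i$th variable has been frozen at $0$), while $\partial/\partial x_i$ commutes with $g_{j,\alpha_j}$ for $j \ne i$ since those operators do not involve $x_i$.

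Next I would compute $D^\beta G_\alpha r_\alpha$ for $\beta \in Z^n$. Because $\beta_i \in \{0,1\}$ and the factors commute appropriately, $D^\beta G_\alpha = \prod_{i=1}^n \bl(\frac{\partial}{\partial x_i}\br)^{\beta_i} g_{i,\alpha_i}$, and each factor $\bl(\frac{\partial}{\partial x_i}\br)^{\beta_i} g_{i,\alpha_i}$ is, by the observations above, one of: the identity (if $\beta_i=0$, giving $g_{i,\alpha_i}$, a contraction, or if $\beta_i=1,\alpha_i=1$, giving $\mathrm{id}$), the zero operator (if $\beta_i=1,\alpha_i=0$), or an evaluation/contraction. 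In every nonzero case the factor has operator norm $\le 1$ on $\mathcal{C}(B)$. Hence $\norm{D^\beta G_\alpha r_\alpha}_\infty \le \norm{r_\alpha}_\infty \le \max_{\gamma \in Z^n}\norm{r_\gamma}_\infty$ for every $\beta$, and in particular $D^\beta G_\alpha r_\alpha$ is continuous, so $Kr \in \mathcal{C}^1_\infty(B)$ as claimed. Summing over the $2^n$ values of $\alpha$ and taking the max over $\beta$ yields the stated inequality.

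**The main obstacle** is bookkeeping rather than depth: one must be careful that the mixed partial $D^\beta$ genuinely factors through the product $G_\alpha$ term by term — i.e. that each $\partial/\partial x_i$ "lands on" exactly the $g_{i,\alpha_i}$ factor and commutes past all the others — and that differentiating under the integral sign in $g_{i,1}$ is justified (it is, since $r_\alpha$ is continuous on the compact cube). A secondary point worth stating explicitly is that the contraction estimate for $g_{i,1}$ uses $\abs{x_i} \le 1$, which holds precisely because $B$ is the \emph{unit} cube; on a larger box one would pick up the box's diameter. I would also note at the outset that it suffices to prove the bound for $Kr$ with $r = p - q$, since linearity of $K$ is immediate from its definition as $\sum_\alpha G_\alpha f_\alpha$ with each $G_\alpha$ linear.
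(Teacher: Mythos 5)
Your proof is correct and follows essentially the same route as the paper's: both rest on the contraction property $\norm{g_{i,j}h}_\infty \le \norm{h}_\infty$ (using $\abs{x_i}\le 1$ on the unit cube), the observation that $\partial/\partial x_i$ either commutes past $g_{j,k}$ ($j\ne i$), collapses $g_{i,1}$ to the identity, or annihilates $g_{i,0}$, and the triangle inequality over the $2^n$ terms of $K$. Your write-up is if anything slightly more careful than the paper's about the commutation bookkeeping and about justifying differentiation under the integral sign.
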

\begin{proof}

This proof works by noticing that the map from any function
$f_\alpha$ to any of the partial derivatives of
$K(\{f_\alpha\}_{\alpha \in Z^n})$ is defined by the composition of
the operators $g_{i,j}$, all of which have small gain. To see this,
first note the following
\begin{align*}
\frac{\partial}{\partial x_i} g_{j,k} f =\begin{cases}g_{j,k}
\frac{\partial}{\partial x_i} f &i \neq
j\\f&i=j,k=1\\0&i=j,k=0.\end{cases}
\end{align*}
Then for all $\alpha, \beta \in Z^n$,
\begin{align*}
\frac{\partial^\beta}{\partial x^\beta}G_\alpha f=
    \begin{cases}
        0 & \alpha_i < \beta_i \text{ for some }i\\
        \displaystyle{\left(\prod_{\substack{i=1 \\ \beta_i \neq 1}}^n g_{i,\alpha_i}\right) f}& \text{otherwise.}
    \end{cases}
\end{align*}
Now, for any $f \in \mathcal{C}(B)$, it follows from the mean value
theorem that for any $x \in B$,
\begin{align*}
|(g_{i,1}f)(x)| &= \left|\int_{0}^{x_i}
f(x_1,\ldots,x_{i-1},\nu,x_{i+1},\ldots,x_n) \,d\nu\right|\\
& \le \sup_{s \in [-1,1]}
|f(x_1,\ldots,x_{i-1},s,x_{i+1},\ldots,x_n)|\\
&\le \|f\|_\infty.
\end{align*}
Thus
\[
    \norm{g_{i,1}f}_\infty \le \norm{f}_\infty
    \]
    for any $i$. Also, it
is clear that for any $i$,
\[
|(g_{i,0}f)(x)|=\left|f(x_1,\ldots,x_{i-1},0,x_{i+1},\ldots,x_n)\right|\le\|f\|_\infty.
\]
Therefore the $g_{i,j}$ have small gain, since
$\|g_{i,j}f\|_\infty\le \|f\|_\infty$ for any $i,j$. Now since for
any $\beta \in Z^n$,
\[
\frac{\partial^\beta}{\partial x^\beta}G_\alpha
\]
is the composition of $g_{i,j}$, induction can be used to prove the
the following for all $\alpha,\beta \in Z^n$.
\[
    \left\|\frac{\partial^\beta}{\partial x^\beta}G_\alpha f\right\| \le \norm{f}.
\]

Therefore
\begin{align*}
\norm{D^\beta Kp -D^\beta K q}_\infty&=\left\|\frac{\partial^\beta}{\partial x^\beta}K(p-q)\right\|_\infty \\
&\le \sum_{\alpha \in Z^n}\left\|\frac{\partial^\beta}{\partial x^\beta} G_\alpha (p_\alpha-q_\alpha)\right\|\\
&\le \sum_{\alpha \in Z^n}\norm{p_\alpha-q_\alpha}\\
&\le  2^n \max_{\alpha \in Z^n}\norm{p_\alpha-q_\alpha}
\end{align*}
for any $\beta \in Z^n$, as desired.
\end{proof}

The following theorem combines Lemmas~\ref{lem:lemma3}
and~\ref{lem:lemmatemp1} with the Weierstrass approximation theorem.
It says that the polynomials are dense in $\mathcal{C}_{\infty}^{1}$
with respect to the Sobolev norm for $W^{1,\infty}$, among others.
%

\begin{thm} \label{thm:theoremtemp1}Suppose $v \in \mathcal{C}_\infty^1(B)$. Then for any $\epsilon > 0$, there exists a polynomial
$p$, such that
\[
    \max_{\alpha \in Z^n}\norm{ D^\alpha p- D^\alpha v}_\infty \le \epsilon.
\]
\end{thm}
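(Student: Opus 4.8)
The plan is to reduce the statement to the two preceding lemmas together with the multivariate Weierstrass theorem quoted above. First I would apply the Weierstrass approximation theorem to each of the $2^n$ partial derivatives $D^\alpha v$, $\alpha \in Z^n$: since $v \in \mathcal{C}_\infty^1(B)$, each $D^\alpha v$ is continuous on the compact set $B$, so there is a polynomial $p_\alpha$ with $\norm{p_\alpha - D^\alpha v}_\infty \le \epsilon/2^n$. Collecting these gives an element $p := \{p_\alpha\}_{\alpha \in Z^n} \in X$, and I would then define the candidate polynomial as $\tilde p := K(p) = K(\{p_\alpha\}_{\alpha \in Z^n})$.

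Next I would verify that $\tilde p$ is in fact a polynomial. This comes down to observing that each elementary operator $g_{i,j}$ maps polynomials to polynomials: for $j=0$, $g_{i,0}$ substitutes $0$ for $x_i$, and for $j=1$, $g_{i,1}$ takes the definite integral from $0$ to $x_i$ in the $i$-th variable, and both operations send a polynomial to a polynomial. Since $G_\alpha$ is a composition of such $g_{i,j}$ and $K(p)$ is a finite sum of the $G_\alpha p_\alpha$, it follows that $\tilde p$ is a polynomial.

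Finally I would estimate the error. By Lemma~\ref{lem:lemma3}, applied with $f_\alpha = D^\alpha v$, we have $K(\{D^\alpha v\}_{\alpha \in Z^n}) = v$, so that $D^\beta K(\{D^\alpha v\}_{\alpha \in Z^n}) = D^\beta v$ for every $\beta \in Z^n$. Then Lemma~\ref{lem:lemmatemp1}, applied to the pair $p = \{p_\alpha\}$ and $q = \{D^\alpha v\}$, yields
\[
    \max_{\beta \in Z^n}\norm{D^\beta \tilde p - D^\beta v}_\infty \le 2^n \max_{\alpha \in Z^n}\norm{p_\alpha - D^\alpha v}_\infty \le 2^n \cdot \frac{\epsilon}{2^n} = \epsilon,
\]
which is exactly the claimed bound. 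So the desired polynomial is $p = \tilde p$.

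I do not expect a genuine obstacle here: the work was done in Lemmas~\ref{lem:lemma3} and~\ref{lem:lemmatemp1}, and this theorem is essentially the assembly step. The only point requiring a little care is the remark that $K$ preserves polynomiality (so that approximating the derivatives by polynomials actually produces a polynomial), and that the choice of tolerance $\epsilon/2^n$ in the Weierstrass step is dictated by the factor $2^n$ in the Lipschitz bound of Lemma~\ref{lem:lemmatemp1}.
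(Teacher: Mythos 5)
Your proposal is correct and follows exactly the same route as the paper's own proof: Weierstrass applied to each $D^\alpha v$ with tolerance $\epsilon/2^n$, then $K$ applied to the resulting tuple, with Lemma~\ref{lem:lemma3} identifying $v=Kf$ and Lemma~\ref{lem:lemmatemp1} supplying the factor $2^n$. Your extra remark that each $g_{i,j}$ preserves polynomiality is a small but worthwhile elaboration of a step the paper states without justification.
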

\begin{proof}
Since $v \in \mathcal{C}_\infty^1(B)$, $D^\alpha v \in
\mathcal{C}(B)$ for all $\alpha \in Z^n$. By the Weierstrass
approximation theorem, there exist polynomials $q_\alpha$ such that
\[
\max_{\alpha \in Z^n} \norm{q_\alpha - D^\alpha v}\le
\frac{\epsilon}{2^n}
\]
Let $q=\{q_\alpha\}_{\alpha \in Z^n}$ and $p=Kq$. Since the
$q_\alpha$ are polynomial, $p$ is polynomial. Let $f=\{D^\alpha
v\}_{\alpha \in Z^n}$. By Lemma~\ref{lem:lemma3}, $v=Kf$. Thus by
Lemma~\ref{lem:lemmatemp1}, we have that
\begin{align*}
    \max_{\alpha \in Z^n}\norm{D^\alpha p-D^\alpha v}&=\max_{\alpha \in Z^n} \norm{D^\alpha K q - D^\alpha K f} \\
    &\le 2^n \max_{\alpha \in Z^n} \norm{
    q_\alpha - D^\alpha v} \le \epsilon.
\end{align*}
\end{proof}

Theorem~\ref{thm:theoremtemp1} shows that for any continuously
differentiable function, $f$, there exists an arbitrarily good
polynomial approximation to the function, with error defined using
the norm $\max_{\alpha \in Z^n}\norm{ D^\alpha f}_\infty$. The proof
can be made constructive by using the Bernstein polynomials to
approximate the partial derivatives. If the partial derivatives are
Lipschitz continuous, then this method also gives explicit bounds on
the error. In practice, numerical experiments indicate that our
constructions, at least in 2 dimensions, tend to have error roughly
equivalent to the standard Bernstein polynomial approximations.



\section{Polynomial Lyapunov Functions}

In this section, we demonstrate that polynomial Lyapunov functions
can be used to approximate continuous Lyapunov functions. To be a
Lyapunov function, a polynomial approximation must satisfy certain
constraints. In particular, if $v$ is a Lyapunov function and $p$ is
a polynomial approximation to $v$, then $p$ is also a Lyapunov
function if it satisfies an error bound of the form
\[
    \left\|\frac{v(x)-p(x)}{x^T x}\right\|_\infty \le \epsilon.
\]
For $p$ to prove exponential stability, the derivatives of $p$ and
$v$ must satisfy a similar bound. The justification for this form of
the error bound is that the error should be everywhere bounded on a
compact set, but in addition must decay to zero near the origin.

The idea behind our proof of the existence of such a polynomial,
$p$, is to combine a Taylor series approximation with the
Weierstrass approximation theorem. Specifically, a second order
Taylor series expansion about a point, $x_0$, has the property that
the error, or residue, $R$, satisfies
\[
\frac{R(x,x_0)}{x^T x} \rightarrow 0
\]
as $x \rightarrow x_0$. However, the error in the Taylor series is
not guaranteed to converge uniformly over an arbitrary compact set
as the order of the expansion increases. The Weierstrass
approximation theorem, on the other hand, gives approximations which
converge uniformly on a compact set, but in general no Weierstrass
approximation will have the residual convergence property mentioned
above for any point, $x_0$. Our approach, then, is to use a second
order Taylor series expansion to guarantee accuracy near the origin.
We then use a Weierstrass polynomial approximation to the error
between the Taylor series and the function away from the origin to
cancel out this error and guarantee a uniform bound. We then use an
approach similar to that taken in Lemmas~\ref{lem:lemmatemp1}
and~\ref{thm:theoremtemp1} to show that the map $K$ can be used to
construct polynomial approximations to differentiable functions in
the norm
\[
    \max_{\alpha \in Z^n}\left\|\frac{D^\alpha v(x)}{x^T
    x}\right\|_\infty.
\]


%

We begin by combining the second order Taylor series expansion and
the Weierstrass approximation.

\begin{lem}\label{lem:lemma2}
Suppose $v \in \mathcal{C}_1^2(B)$. Then for any $\epsilon
> 0$, there exists a polynomial $p$ such that
\[
    \left\|\frac{p(x)-v(x)}{x^T x}\right\|_\infty \le \epsilon.
 \]
\end{lem}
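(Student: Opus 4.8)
The plan is to split the function $v$ into two pieces: a second-order Taylor polynomial about the origin, which by construction captures the behavior of $v$ near $x=0$ well enough that the weighted error stays bounded, plus a remainder which we handle with the ordinary Weierstrass approximation theorem. Write $T(x) := v(0) + \nabla v(0)^T x + \tfrac12 x^T H(0) x$, where $H$ is the Hessian of $v$ (this is legitimate since $v \in \mathcal{C}_1^2(B)$, so all second partials are continuous). Let $R(x) := v(x) - T(x)$ be the Taylor remainder. By Taylor's theorem with, say, the integral form of the remainder, $R(x) = \int_0^1 (1-t)\, x^T \bl H(tx) - H(0)\br x \, dt$ (componentwise), so $|R(x)| \le \tfrac12 \norm{x}_2^2 \cdot \sup_{\norm{y}_2 \le \norm{x}_2} \norm{H(y)-H(0)}$. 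Since $H$ is continuous on the compact set $B$, it is uniformly continuous there, so the supremum tends to $0$ as $x \to 0$; hence $R(x)/(x^T x)$ extends to a continuous function on $B$ (with value $0$ at the origin).

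Next I would invoke the Weierstrass approximation theorem (the multivariate version quoted from Timan) on the continuous function $g(x) := R(x)/(x^T x)$, which is well-defined and continuous on all of $B$ by the previous step. This gives a polynomial $r$ with $\norm{r - g}_\infty \le \epsilon$. Now set $p := T + x^T x \cdot r$, which is a polynomial since $T$, $r$, and $x^T x$ are all polynomial. Then
\[
    \frac{p(x) - v(x)}{x^T x} = \frac{T(x) + x^T x\, r(x) - T(x) - R(x)}{x^T x} = r(x) - \frac{R(x)}{x^T x} = r(x) - g(x),
\]
so $\norm{(p-v)/(x^T x)}_\infty = \norm{r-g}_\infty \le \epsilon$, as required. (The expression $(p-v)/(x^T x)$ is defined for $x \neq 0$ and extends continuously to $x = 0$ with value $r(0) - g(0)$, so the supremum is taken over that continuous extension.)

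The main obstacle — and really the only delicate point — is verifying that $R(x)/(x^T x)$ genuinely extends to a \emph{continuous} function on the whole of $B$, rather than merely being bounded; only continuity lets us apply Weierstrass. This comes down to the uniform continuity of the Hessian on the compact set $B$, which gives $\sup_{\norm{y}_2 \le \delta}\norm{H(y)-H(0)} \to 0$ as $\delta \to 0$ and hence $g(x) \to 0 = g(0)$ as $x \to 0$; continuity away from the origin is automatic since $R$ is continuous and $x^T x$ is nonvanishing there. Care is also needed to confirm $x^T x$ is strictly positive on $B \setminus \{0\}$ (true, since $\norm{x}_2 = 0$ only at the origin) so that the division is valid off the origin. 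Everything else is routine polynomial bookkeeping.
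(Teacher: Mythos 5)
Your proposal is correct and follows essentially the same route as the paper: subtract the second-order Taylor polynomial at the origin, observe that the remainder divided by $x^T x$ extends continuously (with value $0$) to the origin, approximate that quotient uniformly by a polynomial $r$ via Weierstrass, and set $p = T + x^T x\, r$. Your use of the integral form of the remainder to justify continuity at the origin is a slightly more explicit version of the paper's appeal to Taylor's theorem, but the decomposition and the cancellation are identical.
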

\vspace{.5cm}

\begin{proof}
Let the polynomial $m$ be defined using the second order Taylor
series expansion for $v$ about $x=0$ as
\[
    m(x)=v(0)+\sum_{i=1}^n x_i \frac{\partial v}{\partial
x_i }(0)+\frac{1}{2}\sum_{i,j=1}^n x_i x_j \frac{\partial^2
v}{\partial x_i \partial x_j}(0).
\]
Then $m$ approximates $v$ near the origin and specifically
\[
    (v-m)(0)=\frac{\partial (v-m)}{\partial x_i
}(0)=\frac{\partial^2 (v-m)}{\partial x_i \partial x_j}(0)=0
\]
for $i,j=1,\dots,n$.

Now define
\[
    h(x)=\begin{cases}0 & x=0\\
    \frac{v(x)-m(x)}{x^T x}&\text{otherwise}.\end{cases}
\]
Then from Taylor's theorem(See, e.g.~\cite{marsden_veccal}), we have
that
\begin{align*}
    v(x)=&v(0)+\sum_{i=1}^n x_i \frac{\partial v}{\partial
x_i }(0)+\frac{1}{2}\sum_{i,j=1}^n x_i x_j \frac{\partial^2
v}{\partial x_i \partial x_j}(0)+R_2(x)
\end{align*}
where $\frac{R_2(x)}{x^T x}\rightarrow 0$ as $x \rightarrow 0$.
Therefore
\[
    h(x)=\frac{v(x)-m(x)}{x^T x}=\frac{R_2(x)}{x^T x} \rightarrow 0
\]
as $x\rightarrow 0$ and so $h(x)$ is continuous at $0$. Since
$v(x)-m(x)$ and $x^T x$ are continuous and $x^T x \neq 0$ on every
domain not containing $x = 0$ and every point $x \neq 0$ has a
neighborhood not containing $x=0$, we conclude that $h(x)$ is
continuous at every point $x \in \R^n$.


We can now use the Weierstrass approximation theorem, which states
that there exists some polynomial $q$ such that
\[
    \norm{q-h}_{\infty}\le \epsilon.
\]
The Taylor and Weierstrass approximations are now combined as
$p(x)=m(x)+q(x)x^T x$. Then $p$ is polynomial and
\begin{align*}
\left\|\frac{p(x)-v(x)}{x^T x} \right\|_\infty
&=\left\|\frac{m(x)+q(x)x^T
x-v(x)}{x^T x} \right\|_\infty \\
&=\left\|\frac{m(x)-v(x)}{x^T x}+h(x)+\left(q(x)-h(x)\right) \right\|_\infty\\
&=\left\|q(x)-h(x) \right\|_\infty \le \epsilon
\end{align*}

\end{proof}

The proof of the following lemma closely follows that of
Lemma~\ref{lem:lemmatemp1}. However, the presence of the $1/x^T x$
term poses significant technical challenges. In particular, small
gain of the operators $g_{i,j}$ is no longer sufficient. We instead
use an inductive reasoning, similar to small gain, which is
described in the proof.

\begin{lem}\label{lem:lemma4}
Let $p=\{p_\alpha\}_{\alpha \in Z^n} \in X$ and
$q=\{q_\alpha\}_{\alpha \in Z^n} \in X$. Then
\[
\max_{\beta \in Z^n}\left\|\frac{D^\beta Kp(x) -D^\beta Kq(x)}{x^T
x}\right\|_\infty\le 2^n \max_{\alpha \in Z}\left\|\frac{p_\alpha(x)
- q_\alpha(x)}{x^T x}\right\|_\infty.
\]
\end{lem}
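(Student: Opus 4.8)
The plan is to mirror the structure of the proof of Lemma~\ref{lem:lemmatemp1}, but to carry the weight $1/(x^Tx)$ through the estimates. The key quantity to control is, for each $\alpha,\beta\in Z^n$, the operator $\frac{\partial^\beta}{\partial x^\beta}G_\alpha$ applied to the difference $r_\alpha := p_\alpha - q_\alpha$, and we want to bound $\left\|\frac{1}{x^Tx}\frac{\partial^\beta}{\partial x^\beta}G_\alpha r_\alpha\right\|_\infty$ by $\left\|\frac{r_\alpha(x)}{x^Tx}\right\|_\infty$. As established in the proof of Lemma~\ref{lem:lemmatemp1}, $\frac{\partial^\beta}{\partial x^\beta}G_\alpha$ is either zero (when $\alpha_i<\beta_i$ for some $i$) or equals $\prod_{i:\,\beta_i\neq 1} g_{i,\alpha_i}$, a composition of the basic operators $g_{i,0}$ and $g_{i,1}$. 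So it suffices to show that each $g_{i,j}$ does not increase the weighted sup-norm: if $M=\left\|\frac{f(x)}{x^Tx}\right\|_\infty$, i.e. $|f(x)|\le M\,x^Tx$ pointwise, then $|(g_{i,j}f)(x)|\le M\,x^Tx$ as well. Composing this $n$ (or fewer) times and then summing over the $2^n$ indices $\alpha\in Z^n$ gives the factor $2^n$, exactly as in Lemma~\ref{lem:lemmatemp1}.

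The inductive step, then, is the pointwise bound on a single $g_{i,j}$. For $j=0$: $(g_{i,0}f)(x)=f(x_1,\dots,x_{i-1},0,x_{i+1},\dots,x_n)$, so $|(g_{i,0}f)(x)|\le M\big(x_1^2+\cdots+x_{i-1}^2+0+x_{i+1}^2+\cdots+x_n^2\big)\le M\,x^Tx$, since dropping the $i$th coordinate only decreases the quadratic form. For $j=1$: $(g_{i,1}f)(x)=\int_0^{x_i} f(x_1,\dots,x_{i-1},s,x_{i+1},\dots,x_n)\,ds$, and the integrand is bounded in absolute value by $M\big(\sum_{k\neq i}x_k^2 + s^2\big)$, so integrating over $s$ from $0$ to $x_i$ gives $\big|(g_{i,1}f)(x)\big|\le M\big(|x_i|\sum_{k\neq i}x_k^2 + \tfrac{|x_i|^3}{3}\big)$. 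On the cube $B$ we have $|x_i|\le 1$ and $\tfrac{1}{3}|x_i|^3\le x_i^2$, so this is $\le M\big(\sum_{k\neq i}x_k^2 + x_i^2\big)=M\,x^Tx$. Thus $|(g_{i,1}f)(x)|\le M\,x^Tx$ on $B$, which is the required non-expansiveness in the weighted norm.

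Assembling: write $Kp-Kq=\sum_{\alpha\in Z^n}G_\alpha r_\alpha$, apply $\partial^\beta/\partial x^\beta$, use the explicit formula for $\frac{\partial^\beta}{\partial x^\beta}G_\alpha$, apply the single-operator bound inductively along the composition to get $\left|\frac{\partial^\beta}{\partial x^\beta}G_\alpha r_\alpha(x)\right|\le \left\|\frac{r_\alpha(\cdot)}{(\cdot)^T(\cdot)}\right\|_\infty x^Tx$ for each $\alpha$, divide by $x^Tx$, sum over the $2^n$ values of $\alpha$, and bound the sum by $2^n$ times the maximum. I expect the main obstacle — and the only place where something genuinely new beyond Lemma~\ref{lem:lemmatemp1} happens — to be the $j=1$ case above: one must be careful that the antiderivative of a function controlled by $x^Tx$ is again controlled by $x^Tx$ rather than by something larger, and this is exactly where restricting to the bounded region $B$ (so that $|x_i|\le 1$, hence $|x_i|^3\le 3x_i^2$) is used. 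Everything else is bookkeeping identical to the unweighted proof.
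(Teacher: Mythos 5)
Your proposal is correct and follows essentially the same route as the paper: reduce to showing each $g_{i,j}$ is non-expansive in the weighted norm $\norm{f(x)/x^Tx}_\infty$, compose along $\partial^\beta G_\alpha$, and sum over the $2^n$ indices. The only (cosmetic) difference is in the $j=1$ estimate, where you integrate the pointwise bound $|f|\le M(s^2+\sum_{k\ne i}x_k^2)$ explicitly and use $|x_i|^3/3\le x_i^2$, while the paper bounds the integral by the supremum of the integrand and then uses $\nu^2\le x_i^2$; both hinge on $|x_i|\le 1$ and yield the same conclusion.
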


\begin{proof}
Recall that from the definition of $g_{j,k}$, we have that
\begin{align*}
\frac{\partial}{\partial x_i} g_{j,k} f =\begin{cases}g_{j,k}
\frac{\partial}{\partial x_i} f &i \neq
j\\f&i=j,k=1\\0&i=j,k=0\end{cases},
\end{align*}
which implies
\begin{align*}
\frac{\partial^\beta}{\partial x^\beta}G_\alpha f=
    \begin{cases}
        0 & \alpha_i < \beta_i \text{ for some }i\\
        \displaystyle{\left(\prod_{\substack{i=1\\\beta_i \neq 1}}^n g_{i,\alpha_i}\right) f}& \text{otherwise.}
    \end{cases}
\end{align*}

Now consider the term
\[
    \frac{1}{x^T x}(g_{i,j}f)(x).
\]

We would like to obtain bounds on this function. For $j=1$, and for
any $x \in B$,
\begin{align*}
    &\left|\frac{1}{x^T x}(g_{i,1}f)(x)\right|\\
    &=\left|\int_{0}^{x_i}\frac{ f(x_1,\ldots,x_{i-1},t,x_{i+1},\ldots,x_n)}{\sum_{k =1}^n x_k^2} \,d t \right|\\
    &\le\sup_{\nu \in [-|x_i|,|x_i|]}\frac{ \left|f(x_1,\ldots,x_{i-1},\nu,x_{i+1},\ldots,x_n)\right|}{\sum_{k =1}^n x_k^2} \\
&\le \sup_{\nu \in
[-|x_i|,|x_i|]}\frac{\left|f(\ldots,x_{i-1},\nu,x_{i+1},\ldots)\right|}{\nu^2+\sum_{k
\neq i}^n x_k^2} \\
&\le \left\|\frac{f(s)}{s^T s}\right\|_\infty .
\end{align*}
Here the first inequality is due to the mean value theorem and that
$|x_i|\le 1$ and the second inequality follows since $x_i^2 \ge
\nu^2$ for $\nu \in [-|x_i|,|x_i|]$. Therefore, we have
\[
\left\|\frac{1}{x^T x}(g_{i,1}f)(x)\right\|_\infty \le
\left\|\frac{1}{x^T x}f(x)\right\|_\infty.
\]
Similarly, if $j=0$, then for any $x \in B$,
\begin{align*}
    &\left|\frac{1}{x^T x}(g_{i,0}f)(x)\right|\\
    &=\left|\frac{f(x_1,\ldots,x_{i-1},0,x_{i+1},\ldots,x_n)}{x^T x} \right|\\
&\le \left|\frac{f(x_1,\ldots,x_{i-1},0,x_{i+1},\ldots,x_n)}{\sum_{k \neq i}^n x_k^2}\right|\\
&\le \left\|\frac{f(s)}{s^T s} \right\|_\infty,
\end{align*}
where the first inequality follows since $x_i^2 \ge 0$. Therefore,
we have that for $j\in \{0,1\}$ and $i = 1 ,\dots n$,
\[
\left\|\frac{1}{x^T x}(g_{i,j}f)(x)\right\|_\infty \le
\left\|\frac{1}{x^T x}f(x)\right\|_\infty.
\]

Since the terms $G_\alpha$ are compositions of the $g_{i,j}$, we can
apply the above bounds inductively. Specifically, we see that for
any $\beta \in Z^n$,
\begin{align*}
&\left\|\frac{1}{x^T x}\left(\frac{\partial^\beta}{\partial
x^\beta}G_\alpha f\right)(x)\right\|_\infty\\
&= \left\|\frac{1}{x^T x}\left(\left(\prod_{\substack{i=1\\\beta_i \neq 1}}^n g_{i,\alpha_i}\right) f\right)(x)\right\|_\infty\\
&\le \left\|\frac{1}{x^T x}\left(\left(\prod_{\substack{i=2\\\beta_i \neq 1}}^n g_{i,\alpha_i}\right) f\right)(x)\right\|_\infty\\
&\cdots
\le \left\|\frac{f(x)}{x^T x} \right\|_\infty.
\end{align*}

 Now that we have bounds on the $G_\alpha$, we can use the triangle inequality to deduce that for any $\beta \in Z^n$,
\begin{align*}
&\left\|\frac{D^\beta K p(x)-D^\beta K q (x)}{x^T x}\right\|_\infty\\
&=\left\|\frac{1}{x^T x}\frac{\partial^\beta}{\partial x^\beta}K(p-q)(x)\right\|_\infty \\
&\le \sum_{\alpha \in Z^n}\left\|\frac{1}{x^T x}\left(\frac{\partial^\beta}{\partial x^\beta} G_\alpha (p_\alpha-q_\alpha)\right)(x)\right\|_\infty\\
&\le \sum_{\alpha \in Z^n}\left\|\frac{p_\alpha(x)-q_\alpha(x)}{x^T x}\right\|_\infty\\
&\le 2^n \max_{\alpha \in
Z^n}\left\|\frac{p_\alpha(x)-q_\alpha(x)}{x^T x}\right\|_\infty.
\end{align*}
\end{proof}

The following theorem gives the main approximation result of the
paper. It combines Lemmas~\ref{lem:lemma2} and~\ref{lem:lemma4} to
show that polynomials are dense in the space $\mathcal{C}_1^{n+2}$
with respect to the weighted $W^{1,\infty}$ norm with weight $1/x^T
x$, among others.

\begin{thm}\label{thm:theorem1}
Suppose $v$ is a function with partial derivatives
\[
    D^\alpha v \in \mathcal{C}_1^2(B)
\]
for all $\alpha \in Z^n$. Then for any $\epsilon
> 0$, there exists a polynomial $p$, such that
\[
\max_{\alpha \in Z^n} \left\| \frac{D^\alpha p(x) - D^\alpha
v(x)}{x^T x}\right\|_\infty \le \epsilon.
\]
\end{thm}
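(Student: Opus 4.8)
The plan is to mimic exactly the proof of Theorem~\ref{thm:theoremtemp1}, but replace the ordinary Weierstrass step with Lemma~\ref{lem:lemma2} and the gain estimate (Lemma~\ref{lem:lemmatemp1}) with the weighted gain estimate (Lemma~\ref{lem:lemma4}). First I would observe that since $D^\alpha v \in \mathcal{C}_1^2(B)$ for every $\alpha \in Z^n$, each $D^\alpha v$ in particular lies in $\mathcal{C}_1^2(B)$, so Lemma~\ref{lem:lemma2} applies to it: for the given $\epsilon > 0$ there is a polynomial $q_\alpha$ with
\[
    \left\|\frac{q_\alpha(x) - D^\alpha v(x)}{x^T x}\right\|_\infty \le \frac{\epsilon}{2^n}.
\]
Collect these into $q = \{q_\alpha\}_{\alpha \in Z^n} \in X$ and set $p = Kq$; since each $q_\alpha$ is a polynomial and $K$ acts by integrations and evaluations at coordinate hyperplanes, $p$ is again a polynomial.

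Next I would invoke Lemma~\ref{lem:lemma3} with the choice $f = \{D^\alpha v\}_{\alpha \in Z^n}$. To apply it I need $v \in \mathcal{C}_\infty^1(B)$; this follows because the hypothesis $D^\alpha v \in \mathcal{C}_1^2(B)$ for all $\alpha \in Z^n$ certainly forces $D^\alpha v \in \mathcal{C}(B)$ for all $\alpha \in Z^n$, which is exactly the defining condition of $\mathcal{C}_\infty^1(B)$. Hence $Kf = v$. Then, applying Lemma~\ref{lem:lemma4} to the pair $q$ and $f$,
\[
    \max_{\alpha \in Z^n}\left\|\frac{D^\alpha p(x) - D^\alpha v(x)}{x^T x}\right\|_\infty
    = \max_{\alpha \in Z^n}\left\|\frac{D^\alpha Kq(x) - D^\alpha Kf(x)}{x^T x}\right\|_\infty
    \le 2^n \max_{\alpha \in Z^n}\left\|\frac{q_\alpha(x) - D^\alpha v(x)}{x^T x}\right\|_\infty \le \epsilon,
\]
which is the claimed bound.

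I expect no serious obstacle here, since all the heavy lifting has already been done in Lemmas~\ref{lem:lemma2}, \ref{lem:lemma3}, and~\ref{lem:lemma4}. The only points that need a little care are (i) checking that the smoothness hypothesis $D^\alpha v \in \mathcal{C}_1^2(B)$ for all $\alpha \in Z^n$ genuinely supplies both what Lemma~\ref{lem:lemma2} needs (applied componentwise to each $D^\alpha v$) and what Lemma~\ref{lem:lemma3} needs (namely $v \in \mathcal{C}_\infty^1(B)$), and (ii) noting that $Kf$ is well-defined on $f = \{D^\alpha v\}$ — this is immediate from the definition of $K$ once the relevant derivatives are continuous. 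The rest is the same three-line chaining argument as in Theorem~\ref{thm:theoremtemp1}.
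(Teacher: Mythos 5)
Your proposal is correct and is essentially identical to the paper's own proof: apply Lemma~\ref{lem:lemma2} componentwise to each $D^\alpha v$ with tolerance $\epsilon/2^n$, form $p=Kq$, use Lemma~\ref{lem:lemma3} to write $v=Kf$, and conclude via the weighted gain bound of Lemma~\ref{lem:lemma4}. Your extra verification that the hypothesis implies $v\in\mathcal{C}_\infty^1(B)$ (so Lemma~\ref{lem:lemma3} applies) is a point the paper leaves implicit, and it is correct.
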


\begin{proof}
The proof is similar to that for Theorem~\ref{thm:theoremtemp1}. By
assumption, $D^\alpha v \in \mathcal{C}_1^2(B)$ for all $\alpha \in
Z^n$. By Lemma~\ref{lem:lemma2}, there exist polynomial functions
$r_\alpha$ such that
\[
    \max_{\alpha \in Z^n}\left\|\frac{r_\alpha(x) - D^\alpha v(x)}{x^T x}\right\|_\infty \le
\frac{\epsilon}{2^n}.
\]
Let $r=\{r_\alpha\}_{\alpha \in Z^n}$ and $p=Kr$. Then $p$ is
polynomial since the $r_\alpha$ are polynomial. Let $h=\{D^\alpha
v\}_{\alpha\in Z^n}$. Then by Lemma~\ref{lem:lemma3}, $v=Kh$.
Therefore by Lemma~\ref{lem:lemma4}, we have

\begin{align*}
&\max_{\alpha \in Z^n} \left\| \frac{D^\alpha p (x) - D^\alpha v
(x)}{x^T x}\right\|_\infty \\
&=\max_{\alpha \in Z^n} \left\| \frac{D^\alpha K r (x) - D^\alpha K
h (x)}{x^T
x}\right\|_\infty\\
&\le 2^n \max_{\alpha \in Z}\left\|\frac{r_\alpha(x) - D^\alpha
v(x)}{x^T x}\right\|_\infty \le \epsilon,
\end{align*}
as desired.
\end{proof}

We now conclude the section by using Theorem~\ref{thm:theorem1} to
show that the existence of a sufficiently smooth Lyapunov function
which proves exponential stability on a bounded set implies the
existence of a polynomial Lyapunov function which proves exponential
stability on the set.

\begin{prop}\label{prop:prop1}Let $\Omega \subset \R^n$ be bounded
with radius $r$ in norm $\norm{\cdot}_\infty$ and $f(x)$ be
uniformly bounded on $B_r:=\{x \in \R^n: \norm{x}_\infty \le r\}$.
Suppose there exists a $v:B_r \rightarrow \R$ with $D^\alpha v \in
\mathcal{C}_1^2(B_r)$ for all $\alpha\in Z^n$ and such that
\begin{align*}
\beta_0 \norm{x}^2 \le v(x)\le \gamma_0 \norm{x}^2\\
\nabla v(x)^T f(x) \le -\delta_0 \norm{x}^2,
\end{align*}
for some $\beta_0>0$, $\gamma_0>0$ and $\delta_0 > 0$ and all $x \in
\Omega$. Then for any $\beta<\beta_0$, $\gamma > \gamma_0$ and
$\delta<\delta_0 $ there exists a polynomial $p$ such that
\begin{align*}
\beta \norm{x}^2 \le p(x)\le \gamma \norm{x}^2\\
\nabla p(x)^T f(x) \le -\delta \norm{x}^2
\end{align*}
for all $x \in \Omega$.
\end{prop}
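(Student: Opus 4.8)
The plan is to obtain $p$ directly from Theorem~\ref{thm:theorem1}: approximate $v$ together with its first partials in the weighted norm $\max_{\alpha\in Z^n}\norm{D^\alpha(\cdot)(x)/x^Tx}_\infty$ to an accuracy $\epsilon$ that I fix at the very end in terms of the three gaps $\beta_0-\beta$, $\gamma-\gamma_0$, $\delta_0-\delta$ and a bound on $f$. Everything else is bookkeeping with the three hypothesized inequalities.

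First I would deal with the scaling, since Theorem~\ref{thm:theorem1} is stated on the unit cube $B$ while here we work on $B_r$. Applying the change of variables $y=x/r$ to $\tilde v(y):=v(ry)$, one checks that $\tilde v$ satisfies the hypotheses of Theorem~\ref{thm:theorem1} on $B$ — the requirement $D^\alpha\tilde v\in\mathcal{C}_1^2(B)$ for all $\alpha\in Z^n$ is inherited from $D^\alpha v\in\mathcal{C}_1^2(B_r)$ — and the weight transforms by $1/(x^Tx)=r^{-2}(y^Ty)^{-1}$ while each $D^\alpha$ contributes only a bounded power of $r$. Hence Theorem~\ref{thm:theorem1} holds verbatim with $B$ replaced by $B_r$: for every $\epsilon>0$ there is a polynomial $p$ with $\max_{\alpha\in Z^n}\norm{(D^\alpha p-D^\alpha v)(x)/x^Tx}_\infty\le\epsilon$ on $B_r$. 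Writing $\norm{x}^2=x^Tx$ (any other fixed norm only changes the constants below, since all norms on $\R^n$ are equivalent on the bounded set $B_r$), and recalling that $0\in Z^n$ and each $e_i\in Z^n$, this yields the pointwise bounds $|p(x)-v(x)|\le\epsilon\norm{x}^2$ and $|\partial_i p(x)-\partial_i v(x)|\le\epsilon\norm{x}^2$ for $i=1,\dots,n$, valid for all $x\in B_r\supseteq\Omega$.

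Next I would substitute these into the Lyapunov inequalities, which are assumed on $\Omega$. From $v(x)\le\gamma_0\norm{x}^2$ we get $p(x)\le(\gamma_0+\epsilon)\norm{x}^2$; from $v(x)\ge\beta_0\norm{x}^2$ we get $p(x)\ge(\beta_0-\epsilon)\norm{x}^2$. For the decrease condition, write $\nabla p(x)^Tf(x)=\nabla v(x)^Tf(x)+(\nabla p(x)-\nabla v(x))^Tf(x)$; the first term is $\le-\delta_0\norm{x}^2$ by hypothesis, and the second is bounded, component by component, by $\sum_{i=1}^n|\partial_i p(x)-\partial_i v(x)|\,|f_i(x)|\le nM\epsilon\norm{x}^2$, where $M$ is a positive uniform bound for $f$ on $B_r$ (which exists by assumption; note $f$ is only required bounded, not continuous, and this is exactly where that hypothesis enters). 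Hence $\nabla p(x)^Tf(x)\le-(\delta_0-nM\epsilon)\norm{x}^2$ on $\Omega$. Choosing $\epsilon=\min\{\gamma-\gamma_0,\ \beta_0-\beta,\ (\delta_0-\delta)/(nM)\}>0$ makes all three desired inequalities hold simultaneously on $\Omega$, completing the proof.

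The substantive content is entirely in Theorem~\ref{thm:theorem1}; once the weighted $W^{1,\infty}$ approximation is available the rest is elementary. Accordingly the only points that need genuine care are (i) the transfer of Theorem~\ref{thm:theorem1} from $B$ to $B_r$ by the rescaling above, verifying that the weight $1/x^Tx$ and the $\mathcal{C}_1^2$ smoothness behave correctly under $x\mapsto x/r$; (ii) reconciling the norm conventions, since the statement uses $\norm{\cdot}$ whereas the approximation theorem is phrased with $x^Tx$ — harmless by norm equivalence on the bounded region, affecting only the constant in the choice of $\epsilon$; and (iii) recording that $M>0$ may always be assumed (if $f\equiv0$ on $B_r$ the hypotheses force $\Omega\subseteq\{0\}$ and the claim is trivial), so the division defining $\epsilon$ is legitimate.
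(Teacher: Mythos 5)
Your proposal is correct and follows essentially the same route as the paper: rescale to the unit cube, invoke Theorem~\ref{thm:theorem1} to get a weighted $W^{1,\infty}$ approximation with error $\epsilon\norm{x}^2$, and then absorb the perturbations into the gaps $\beta_0-\beta$, $\gamma-\gamma_0$, $(\delta_0-\delta)/(nM)$. The only cosmetic difference is that the paper fixes the tolerance $d$ before invoking the approximation theorem while you choose $\epsilon$ at the end, and your remark that $M>0$ may be assumed (else $\Omega\subseteq\{0\}$) is a small point the paper leaves implicit.
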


\begin{proof}
Let $\hat{v}(x)=v(rx)$ and
\[
b = \norm{f}_\infty=\sup_{\norm{x}_\infty \le r} \norm{f(x)}_\infty.
\]
Choose
$0<d<\min\{\beta_0-\beta,\gamma-\gamma_0,\frac{\delta_0-\delta}{n
b}\}$. By Theorem~\ref{thm:theorem1}, there exists a polynomial,
$\hat p$, such that for $\norm{x}_\infty \le 1$,
\[
\left|\frac{\hat p(x)-\hat v(x)}{x^T x}\right| \le \frac{d}{r^2}
\]
and
\[
\left| \frac{\frac{\partial \hat p }{\partial x^i}(x) -
\frac{\partial \hat v }{\partial x^i}(x)}{x^T x}\right| \le
\frac{d}{r^2}
\]
for $i=1,\dots,n$. Now let $p(x)=\hat p(x/r)$. Then for $x \in
\Omega$, $\norm{x}_\infty \le r$ and so $\norm{x/r}_\infty\le 1$.
Therefore we have the following for all $x \in \Omega$,
\begin{align*}
    p(x)&=v(x)+\hat p (x/r) - \hat v(x/r)\\
    &=v(x)+\frac{\hat p(x/r)-\hat v(x/r)}{(x/r)^T(x/r)} r^2 x^T x\\
    &\ge (\beta_0-d)x^T
    x\\
    &\ge \beta x^T x.
\end{align*}
Likewise, 
\begin{align*}
    p(x)&=v(x)+\frac{\hat p(x/r)-\hat v(x/r)}{(x/r)^T(x/r)} r^2 x^T x\\
    &\le (\gamma_0+d)x^T x\\
    &\le \gamma x^T x.
\end{align*}
Finally, 
\begin{align*}
\nabla p(x)^T f(x)&
= \frac{\nabla(\hat p(x/r)-\hat v(x/r))^T f}{x^T x}x^T x +\nabla v(x)^T f(x)\\
&= \sum_{i=1}^n \left(r^2 \frac{\frac{\partial \hat p}{\partial
x_i}(x/r)-\frac{\partial \hat v}{\partial x_i}(x/r)}{(x/r)^T
(x/r)}f_i(x)  \right)x^T x
+\nabla v(x)^T f(x)\\
&\le n\, d\, b\, x^T x - \delta_0 x^T x\\
&\le - \delta x^T x.
\end{align*}
Thus the proposition holds for $x \in \Omega$.
\end{proof}

A consequence of Proposition~\ref{prop:prop1} is that when
estimating exponential rates of decay, using polynomial Lyapunov
functions does not result in a reduction of accuracy. i.e. if there
exists a continuous Lyapunov function proving an exponential rate of
decay with bound $\alpha_0$, then for any $0 < \alpha < \alpha_0$,
there exists a polynomial Lyapunov function which proves an
exponential rate of decay with bound $\alpha$.

\section{Lyapunov Stability}\label{sec:lyapunov}

Consider the system
\begin{equation}
    \dot{x}(t)=f(x(t))
    \label{eqn:nonlinearODE2}
\end{equation}
where $f : \R^n \rightarrow \R^n$, $f(0)=0$ and $x(0)=x_0$. We
assume that there exists an $r \ge 0$ such that for any
$\norm{x_0}_\infty \le r$, Equation~\eqref{eqn:nonlinearODE2} has a
unique solution for all $t\ge 0$. We define the solution map $A:
\R^n \rightarrow \mathcal{C}([0,\infty))$ by
\[
(Ay)(t)=x(t)
\]
for $t\ge 0$, where $x$ is the unique solution of
Equation~\eqref{eqn:nonlinearODE2} with initial condition $y$. The
following comes from~\citet{vidyasagar_book}.

\begin{thm}\label{thm:converse}
Consider the system defined by Equation~\eqref{eqn:nonlinearODE2}
and suppose that $f \in \mathcal{C}_1^k(\R^n)$ for some integer $k
\ge 1$. Suppose that there exist constants $\mu,\delta,r>0$ such
that
\[
    \norm{(A x_0)(t)}_2 \le \mu \norm{x_0}_2e^{-\delta t}
\]
for all $t \ge 0$ and $\norm{x_0}_2 \le r$. Then there exists a
$\mathcal{C}_1^k(\R^n)$ function $V:\R^n \rightarrow \R$ and
constants $\alpha,\beta,\gamma,\mu > 0$ such that
\begin{align*}
\alpha \norm{x}_2^2 \le &V(x) \le \beta \norm{x}_2^2\\
\frac{\partial }{\partial t}V((Ax)(t))\le &-\gamma \norm{x}_2^2
\end{align*}
for all $\norm{x}_2 \le r$.
\end{thm}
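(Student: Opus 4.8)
The plan is to use the classical integral construction of a converse Lyapunov function, but over a \emph{finite} time horizon so that the required regularity comes essentially for free. Since $f \in \mathcal{C}_1^k(\R^n)$ and $f(0)=0$, the standard theorem on differentiable dependence of solutions of $\dot x = f(x)$ on the initial condition gives that, for each fixed $t$, the map $x \mapsto (Ax)(t)$ is $k$-times continuously differentiable, with all partials jointly continuous in $(t,x)$, on an open neighborhood $U$ of the closed Euclidean ball $\{\norm{x}_2 \le r\}$ (such a neighborhood exists because the set of initial data whose maximal existence time exceeds $T$ is open and contains $\{\norm{x}_2\le r\}$). The decay hypothesis $\norm{(Ax)(t)}_2 \le \mu \norm{x}_2 e^{-\delta t}$ shows that every trajectory starting in $\{\norm{x}_2\le r\}$ stays in the compact ball $\{\norm{x}_2 \le \mu r\}$ for all $t\ge 0$. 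Fix $T>0$ with $\mu^2 e^{-2\delta T} \le \tfrac12$ and set
\[
    V(x) := \int_0^T \norm{(Ax)(t)}_2^2 \, dt .
\]
Because the integrand and its $x$-derivatives of order $\le k$ are continuous on the compact set $[0,T]\times\{\norm{x}_2\le r\}$, differentiation under the integral sign is valid, so $V$ is $\mathcal{C}_1^k$ on (a neighborhood of) $\{\norm{x}_2 \le r\}$.

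Next I would establish the three inequalities for $\norm{x}_2 \le r$. The upper bound follows directly from the decay hypothesis: $V(x) \le \int_0^T \mu^2 \norm{x}_2^2 e^{-2\delta t}\, dt \le \tfrac{\mu^2}{2\delta}\norm{x}_2^2$, so one may take $\beta = \mu^2/(2\delta)$. For the lower bound, $f \in \mathcal{C}_1^1$ is Lipschitz with some constant $L$ on $\{\norm{x}_2 \le \mu r\}$, and together with $f(0)=0$ this gives $\tfrac{d}{dt}\norm{(Ax)(t)}_2^2 = 2 (Ax)(t)^T f((Ax)(t)) \ge -2L\norm{(Ax)(t)}_2^2$, hence $\norm{(Ax)(t)}_2^2 \ge \norm{x}_2^2 e^{-2Lt}$ and $V(x) \ge \tfrac{1-e^{-2LT}}{2L}\norm{x}_2^2$, a positive multiple of $\norm{x}_2^2$. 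For the derivative along trajectories, the uniqueness (cocycle) identity $(A((Ax)(s)))(t) = (Ax)(s+t)$ gives $V((Ax)(t)) = \int_t^{t+T} \norm{(Ax)(\tau)}_2^2\,d\tau$, so
\[
    \frac{\partial}{\partial t} V((Ax)(t)) = \norm{(Ax)(t+T)}_2^2 - \norm{(Ax)(t)}_2^2 \le \left( \mu^2 e^{-2\delta T} - 1\right) \norm{(Ax)(t)}_2^2 \le -\tfrac12 \norm{(Ax)(t)}_2^2 ,
\]
and in particular at $t=0$ this is $\le -\tfrac12 \norm{x}_2^2$, so $\gamma = \tfrac12$ works.

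It remains to produce a function that is $\mathcal{C}_1^k$ on \emph{all} of $\R^n$, as the statement requires. For this I would multiply $V$ by a smooth cutoff $\psi$ that equals $1$ on $\{\norm{x}_2 \le r\}$ and is compactly supported in the neighborhood $U$ on which $V$ is defined and $\mathcal{C}_1^k$, and extend $\psi V$ by zero. The result is $\mathcal{C}_1^k(\R^n)$ and coincides with $V$ on $\{\norm{x}_2 \le r\}$, hence satisfies the two bracketed inequalities there with the constants already identified.

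I expect the one genuinely non-elementary ingredient to be the regularity claim for $V$: it rests on the theorem that solutions of $\dot x = f(x)$ depend $\mathcal{C}^k$-smoothly on their initial data (equivalently, on solvability and continuity of the iterated variational equations), after which continuity of the integrand and its derivatives on a compact parameter set makes differentiation under the integral routine. Using a finite horizon $T$ rather than integrating to $+\infty$ is exactly what keeps this step painless — with an infinite horizon one would additionally have to bound the growth of the variational flow in order to interchange derivative and integral, which is where the real work would lie.
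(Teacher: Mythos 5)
The paper offers no proof of this theorem --- it is quoted from \citet{vidyasagar_book} --- and the standard argument given in such references is exactly your finite-horizon construction $V(x)=\int_0^T\norm{(Ax)(t)}_2^2\,dt$, so your proposal is correct and matches the intended proof. The only step to watch is the estimate $\norm{(Ax)(t+T)}_2^2\le \mu^2e^{-2\delta T}\norm{(Ax)(t)}_2^2$, which invokes the decay hypothesis with $(Ax)(t)$ as initial condition and is therefore only licensed when $\norm{(Ax)(t)}_2\le r$ (it can fail for $t>0$ when $\mu>1$); since the theorem's inequality is the Lie derivative $\nabla V(x)^Tf(x)$, i.e.\ the case $t=0$ where $\norm{x}_2\le r$ holds by assumption, your argument goes through as written.
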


The following gives a converse Lyapunov result which may be taken as
the main conclusion of the paper.\\

\begin{thm}\label{prop:conversepoly}
Consider the system defined by Equation~\eqref{eqn:nonlinearODE2}
where $f \in C_1^{n+2}(\R^n)$ . Suppose there exist constants
$\mu,\delta,r>0$ such that
\[
    \norm{A x_0(t)}_2 \le \mu \norm{x_0}_2e^{-\delta t}
\]
for all $t \ge 0$ and $\norm{x_0}_2 \le r$.

Then there exists a \eemph{polynomial} $v:\R^n \rightarrow \R$ and
constants $\alpha,\beta,\gamma,\mu > 0$ such that
\begin{align*}
\alpha \norm{x}_2^2 \le v(x) \le &\beta \norm{x}_2^2\\
\nabla v(x)^T f(x)\le &-\gamma \norm{x}_2^2
\end{align*}
for all $\norm{x}_2 \le r$.
\end{thm}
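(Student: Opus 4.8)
The plan is to combine the converse theorem stated as Theorem~\ref{thm:converse} with the approximation machinery of Proposition~\ref{prop:prop1}. First I would observe that the hypotheses of Theorem~\ref{prop:conversepoly} are exactly those of Theorem~\ref{thm:converse} with $k = n+2$, so there exists a $\mathcal{C}_1^{n+2}(\R^n)$ function $V:\R^n \to \R$ and constants $\alpha_0, \beta_0, \gamma_0 > 0$ with $\alpha_0\norm{x}_2^2 \le V(x) \le \beta_0\norm{x}_2^2$ and $\frac{\partial}{\partial t}V((Ax)(t)) \le -\gamma_0\norm{x}_2^2$ on the ball $\norm{x}_2 \le r$. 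The one routine reconciliation step is to rewrite the derivative condition: since $x(t)$ solves $\dot x = f(x)$, the chain rule gives $\frac{\partial}{\partial t}V((Ax)(t))\big|_{t=0} = \nabla V(x)^T f(x)$, so $\nabla V(x)^T f(x) \le -\gamma_0\norm{x}_2^2$ for all $\norm{x}_2 \le r$.

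Next I would reconcile the norms and the shape of the region. Proposition~\ref{prop:prop1} is stated for a set $\Omega$ of radius $r$ in $\norm{\cdot}_\infty$, whereas here the hypotheses are phrased in $\norm{\cdot}_2$; one takes $\Omega := \{x : \norm{x}_2 \le r\}$, which is bounded in $\norm{\cdot}_\infty$ by $r$, and notes that $f$ is uniformly bounded on $B_r$ because $f \in \mathcal{C}_1^{n+2}(\R^n)$ is continuous hence bounded on the compact set $B_r$. The smoothness hypothesis $f \in C_1^{n+2}(\R^n)$ together with $V \in \mathcal{C}_1^{n+2}(B_r)$ must be shown to give $D^\alpha V \in \mathcal{C}_1^2(B_r)$ for all $\alpha \in Z^n$: since $\norm{\alpha}_1 \le n$ for $\alpha \in Z^n$ and we need two further derivatives, we need $V$ to have continuous derivatives up to total order $n+2$, which is exactly $\mathcal{C}_1^{n+2}(B_r)$. (Here is also where the hypothesis on $f$ being $C_1^{n+2}$ rather than merely $n$-times differentiable is used: Theorem~\ref{thm:converse} transfers the regularity of $f$ to $V$.)

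With these identifications, I would invoke Proposition~\ref{prop:prop1} directly: for the given tolerances choose any $\alpha < \alpha_0$, $\beta > \beta_0$, $\gamma < \gamma_0$ and obtain a polynomial $p$ with $\alpha\norm{x}^2 \le p(x) \le \beta\norm{x}^2$ and $\nabla p(x)^T f(x) \le -\gamma\norm{x}^2$ for all $x \in \Omega$, i.e. for all $\norm{x}_2 \le r$. Taking $v := p$ and relabeling the constants $\alpha,\beta,\gamma$ (and carrying along $\mu$ from the hypothesis, which plays no role beyond being positive) yields the claim. The quadratic-form bounds are stated in Proposition~\ref{prop:prop1} using an unspecified norm $\norm{\cdot}$, but since all norms on $\R^n$ are equivalent this only rescales the constants, so the conclusion can be written in $\norm{\cdot}_2$.

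The only genuine obstacle is bookkeeping rather than mathematics: making sure the smoothness index matches up so that Proposition~\ref{prop:prop1}'s hypothesis "$D^\alpha v \in \mathcal{C}_1^2(B_r)$ for all $\alpha \in Z^n$" is actually delivered by Theorem~\ref{thm:converse}'s conclusion "$V \in \mathcal{C}_1^{n+2}(\R^n)$", and handling the $\norm{\cdot}_2$ versus $\norm{\cdot}_\infty$ discrepancy between the two statements. Both are straightforward once one uses $Z^n \subset \{\alpha : \norm{\alpha}_1 \le n\}$ and norm equivalence on finite-dimensional spaces; there is no new estimate to prove.
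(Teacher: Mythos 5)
Your proposal is correct and follows exactly the route of the paper's own (much terser) proof: invoke Theorem~\ref{thm:converse} with $k=n+2$ to obtain a $\mathcal{C}_1^{n+2}$ Lyapunov function, then apply Proposition~\ref{prop:prop1} to replace it with a polynomial one. The bookkeeping you supply --- the chain-rule identification $\frac{\partial}{\partial t}V((Ax)(t))\big|_{t=0}=\nabla V(x)^Tf(x)$, the observation that $\norm{\alpha}_1\le n$ for $\alpha\in Z^n$ so that $D^\alpha V\in\mathcal{C}_1^2$, and the $\norm{\cdot}_2$ versus $\norm{\cdot}_\infty$ reconciliation --- is precisely what the paper leaves implicit.
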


\begin{proof}
We use Theorem~\ref{thm:converse} to prove the existence of a
Lyapunov function $V : \R^n \rightarrow \R$ with $ V \in
\mathcal{C}_1^{n+2}(\R^n)$ satisfying the conditions on $\Omega :=
\{x:\norm{x}_2 \le r\}$. Since $\mathcal{C}_\infty^1(\R^n) \subset
\mathcal{C}_1^n(\R^n)$, Theorem~\ref{prop:prop1} proves the
existence of a polynomial function $v$ which satisfies the theorem
statement.
\end{proof}

An important corollary of Theorem~\ref{prop:conversepoly} is that
ordinary differential equations defined by polynomials have
polynomial Lyapunov functions. Since polynomial optimization is
typically applied to systems defined by polynomials, this means that
the assumption of a polynomial Lyapunov function is not
conservative.

In polynomial optimization, it is common to use Positivstellensatz
results to find locally positive polynomial Lyapunov functions in a
manner similar to the $S$-procedure. When the polynomial $v$ can be
assumed to be positive, i.e. $v(x)>0$ for all $x$, these conditions
are necessary and sufficient.
See~\citet{stengle_1973},~\citet{schmudgen_1991},
and~\citet{putinar_1993} for strong theoretical contributions.
Unfortunately, the polynomial Lyapunov functions are not positive
since $v(0)=0$, and so these conditions are no longer necessary and
sufficient. However, Positivstellensatz results still allow us to
search over polynomial Lyapunov functions in a manner which has
proven very effective in practice.

\begin{defn}
A polynomial, $p$, is \eemph{sum-of-squares}, if there exists a
$K>0$ and polynomials $g_i$ for $i=1,\dots,K$ such that
\[
    p(x)=\sum_{i=1}^K g_i(x)^2.
\]
\end{defn}

\begin{prop}
Consider the system defined by Equation~\eqref{eqn:nonlinearODE2}
where $f$ is polynomial. Suppose there exists a polynomial $v:\R^n
\rightarrow \R$, a constant $\epsilon>0$, and sum-of-squares
polynomials $s_1,s_2,t_1,t_2:\R^n \rightarrow \R$ such that
\[
    v(x)-s_1(x)(r-x^T x) - s_2(s)- \epsilon \, x^T x=0
\]
and
\[
    -\nabla v(x)^T f(x) - t_1(x)(r-x^T x) -t_2(x) -\epsilon \, x^T x= 0
\]
Then there exist constants $\mu,\delta,r>0$ such that
\[
    \norm{(A x_0)(t)}_2 \le \mu \norm{x_0}_2e^{-\delta t}
\]
for all $t \ge 0$ and $x_0 \in Y(v,r)$ where $Y(v,r)$ is the largest
sublevel set of $v$ contained in the ball $\norm{x}^2 \le r$.
\end{prop}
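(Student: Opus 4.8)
The goal is to show that the sum-of-squares certificates in the hypothesis force the conditions of Theorem~\ref{thm:converse}'s converse-type setting — really, we just need to exhibit a genuine Lyapunov function on the relevant sublevel set and then invoke a standard Lyapunov stability argument to get the exponential bound. First I would unpack what the two polynomial identities say. Since $s_1,s_2$ are sums of squares, they are nonnegative everywhere; hence on the ball $\{x : x^Tx \le r\}$ we have $s_1(x)(r - x^Tx) \ge 0$, so the first identity gives $v(x) = s_1(x)(r-x^Tx) + s_2(x) + \epsilon\, x^Tx \ge \epsilon\, x^Tx$ on that ball. Similarly the second identity gives $-\nabla v(x)^T f(x) = t_1(x)(r-x^Tx) + t_2(x) + \epsilon\, x^Tx \ge \epsilon\, x^Tx$, i.e. $\nabla v(x)^T f(x) \le -\epsilon\, x^Tx$ on the ball $\{x : x^Tx \le r\}$. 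So $v$ is positive definite and its derivative along trajectories is negative definite, with matching quadratic lower bounds, on $\{x^Tx \le r\}$.

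Next I would pin down the region of validity. Because $v$ is a polynomial it is continuous, and on the compact ball $\{x^Tx \le r\}$ it attains a maximum, so on the sublevel set $Y(v,r)$ — the largest sublevel set of $v$ contained in $\{x^Tx \le r\}$ — we also have an upper bound $v(x) \le C\, x^Tx$ for some constant $C$, using continuity of $v(x)/x^Tx$ away from the origin together with the second-order Taylor behavior near the origin (note $\nabla v(0)$ and $v(0)$ need not vanish a priori, but $v(0) \ge \epsilon \cdot 0 = 0$ from the first identity evaluated at $0$, and in fact $v(0) = s_1(0) r + s_2(0) \ge 0$; one checks $v(0) = 0$ is forced because otherwise $v$ could not be bounded above by a multiple of $x^Tx$ near $0$ — actually this needs care, so I would instead just bound $v(x) \le \beta' x^Tx$ on $Y(v,r)$ directly by compactness of $Y(v,r) \setminus \{0\}$ and by the requirement that trajectories stay in $Y(v,r)$, which is where sublevel-set invariance is used). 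Then standard invariance: if $x_0 \in Y(v,r)$, then $\frac{d}{dt} v(x(t)) = \nabla v(x(t))^T f(x(t)) \le 0$ as long as $x(t) \in \{x^Tx\le r\}$, so $v(x(t))$ is nonincreasing and the trajectory never leaves the sublevel set $Y(v,r)$; hence the derivative bound holds for all $t \ge 0$.

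Finally I would run the classical exponential-stability comparison argument. On $Y(v,r)$ we have $\epsilon\, x^Tx \le v(x) \le \beta' x^Tx$ and $\dot v(x(t)) \le -\epsilon\, \|x(t)\|_2^2 \le -(\epsilon/\beta') v(x(t))$, so by Gr\"onwall's inequality $v(x(t)) \le v(x_0) e^{-(\epsilon/\beta')t}$, and therefore $\|x(t)\|_2^2 \le v(x(t))/\epsilon \le (\beta'/\epsilon) \|x_0\|_2^2 e^{-(\epsilon/\beta')t}$, giving the claimed bound with $\mu = \sqrt{\beta'/\epsilon}$ and $\delta = \epsilon/(2\beta')$; the radius $r$ in the conclusion is taken so that $\{x : \|x\|_2 \le r\} \subset Y(v,r)$, possible since $Y(v,r)$ contains a neighborhood of the origin. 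The main obstacle is the bookkeeping around $Y(v,r)$: establishing that $Y(v,r)$ is nonempty, contains a ball around $0$, and is positively invariant, and obtaining a clean quadratic upper bound $v(x) \le \beta' x^Tx$ on it — in particular handling the possibility that $v$ does not vanish to second order at the origin, which would break the quadratic sandwich; if the problem statement intends $v$ to be a Lyapunov function in the usual sense this is automatic, and otherwise one must argue it from the identities themselves.
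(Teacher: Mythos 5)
The paper does not actually prove this proposition; it defers to \citet{papachristodoulou_2002}, so there is no in-paper argument to compare against. Your outline is the standard one and is essentially what that reference does: read off $v(x)\ge\epsilon\,x^Tx$ and $\nabla v(x)^Tf(x)\le-\epsilon\,x^Tx$ on the ball from nonnegativity of the sum-of-squares multipliers, establish positive invariance of $Y(v,r)$ from $\dot v\le 0$ there, sandwich $v$ between two quadratics, and run the Gr\"onwall comparison. That skeleton is correct.

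The one genuine gap is exactly the one you flag, and it deserves to be called a gap rather than a bookkeeping issue: the quadratic upper bound $v(x)\le\beta'\,x^Tx$ does not follow from the stated hypotheses. Evaluating the first identity at $x=0$ gives only $v(0)=r\,s_1(0)+s_2(0)\ge 0$, and your fallback of bounding $\sup v(x)/x^Tx$ over $Y(v,r)\setminus\{0\}$ by compactness fails because that set is not compact and the ratio blows up at the origin whenever $v(0)>0$ (e.g.\ $v(x)=1+x^Tx$ with $s_1=0$, $s_2=1$, $\epsilon=1$ satisfies the first identity). Without the upper bound you get only $\dot v\le-\epsilon\norm{x(t)}_2^2$, hence integrability of $\norm{x(t)}_2^2$ and asymptotic convergence, but not the rate $\mu\norm{x_0}_2e^{-\delta t}$ in the stated form. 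The fix is to require $v(0)=0$ (implicit in the cited reference and consistent with every Lyapunov candidate elsewhere in this paper, all of which satisfy $v(x)\le\gamma\norm{x}_2^2$); once that is granted, $v\ge\epsilon\,x^Tx\ge 0$ makes the origin a local minimum, so $\nabla v(0)=0$, and $v(x)/x^Tx$ extends continuously to the compact ball (Taylor expansion near $0$, continuity elsewhere), yielding $\beta'$. Your invariance and comparison steps then close the proof. Two minor points: the conclusion is stated for $x_0\in Y(v,r)$, so there is no need to shrink to a Euclidean ball inside $Y(v,r)$ at the end; and the proposition's ``there exist constants $\mu,\delta,r>0$'' reuses the symbol $r$ already fixed in the hypothesis --- the intended reading is that the hypothesis' $r$ is the one that matters.
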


See~\citet{papachristodoulou_2002} for a proof and more details on
using semidefinite programming to construct solutions to this
polynomial optimization problem.

\section{Conclusion}
The main result of this paper is a proof that exponential stability
of a sufficiently smooth nonlinear ordinary differential equation on
a bounded region implies the existence of a polynomial Lyapunov
function which decreases exponentially on the region. A corollary of
this result is that ordinary differential equations defined by
polynomials have polynomial Lyapunov functions. An important
application of polynomial programming is the search for a polynomial
Lyapunov function which proves local exponential stability. Our
results, therefore, tend to support continued research into
improving polynomial optimization algorithms.

In addition, as a byproduct of our proof, we were able to give a
method for constructing polynomial approximations to differentiable
functions. The interesting feature of this construction is the
guaranteed convergence of the derivatives of the approximation.
Another consequence of the results of this paper is that the
polynomials are dense in $\mathcal{C}_\infty^1(B)$ with respect to
the Sobolev norm $W^{1,\infty}(B)$.

\bibliography{weierstrass}

\end{document}